\newtheorem{theorem}{Theorem}[section]
\theoremstyle{definition}
\theoremstyle{remark} \theoremstyle{remark}
\newtheorem{remark}[theorem]{Remark}
\newtheorem{example}[theorem]{Example}
\numberwithin{equation}{section}
\begin{document}

\title[Paracontact metric manifolds]{Paracontact metric manifolds without a contact metric counterpart}

\author[V. Mart\'{i}n-Molina]{Ver\'onica  Mart\'{i}n-Molina}
 \address{Centro Universitario de la Defensa, Academia General Militar, Ctra. de Huesca s/n, 50090 Zaragoza, SPAIN\\
 AND I.U.M.A, Universidad de Zaragoza, SPAIN}
 \email{vmartin@unizar.es}

\thanks{The author is is partially supported by the PAI group
FQM-327 (Junta de Andaluc\'ia, Spain), the group Geometr\'ia E15 (Gobierno de Arag\'on, Spain),  the MINECO grant MTM2011-22621 and the ``Centro Universitario de la Defensa de Zaragoza'' grant ID2013-15.}

\begin{abstract}
We study non-paraSasakian paracontact metric $(\kappa,\mu)$-spaces with $\kappa=-1$ (equivalent to $h^2=0$ but $h\neq0$). These manifolds, which do not have a contact geometry counterpart, will be classified locally in terms of the rank of $h$. We will also give explicit examples of every possible constant rank of $h$.
\end{abstract}

\subjclass[2010]{53C15, 53C25, 53C50}

\keywords{paracontact metric manifold,  paraSasakian, nullity distribution, $(\kappa,\mu)$-spaces}

\maketitle

\section{Introduction}

A remarkable class of paracontact metric manifolds $(M,\phi,\xi,\eta,g)$ is that of paracontact metric $(\kappa,\mu)$-spaces, which satisfy the nullity condition
\begin{equation}\label{kappamu}
R(X,Y)\xi=\kappa(\eta(Y)X-\eta(X)Y)+\mu(\eta(Y)hX-\eta(X) hY),
\end{equation}
for all $X,Y$ vector fields on $M$, where $\kappa$ and $\mu$ are constants and $h=\frac12 L_\xi \varphi$.

This definition, which may appear quite technical, arises from the deep and meaningful relationship between contact metric $(\kappa,\mu)$-spaces and paracontact geometry. More precisely, it was proved in \cite{mino-pacific} that any non-Sasakian contact metric $(\kappa,\mu)$-space accepts two paracontact metric $(\widetilde\kappa,\widetilde\mu)$-structures with the same contact form. On the other hand, under certain natural conditions, every non-paraSasakian paracontact $(\widetilde{\kappa},\widetilde{\mu})$-space carries a contact metric $(\kappa,\mu)$-structure compatible with the same contact form (\cite{CKM}).
The class of paracontact metric $(\kappa,\mu)$-spaces includes the paraSasakian ones (see \cite{kaneyuki} and \cite{zamkovoy}) and the ones satisfying $R(X,Y)\xi=0$ for all $X,Y$ (studied in \cite{zamkovoy-arxiv2}), among others.

There are some notable differences between a contact metric $(\kappa,\mu)$-space $(M,\phi,\xi,\eta,g)$ and a paracontact metric $(\widetilde\kappa,\widetilde \mu)$-space  $(\widetilde M,\widetilde\phi, \widetilde\xi, \widetilde\eta, \widetilde{g})$. First of all, while they satisfy  $h^2=(\kappa-1)\varphi^2$ and $\widetilde{h}^2=(\widetilde\kappa+1)\widetilde\varphi^2$, respectively, the first condition means that  $\kappa \leq 1$ but the second one does not give any type of restriction for $\widetilde\kappa$ because the metric of a paracontact metric manifold is not positive definite (see \cite{blair95} for the contact metric case and \cite{CKM} for the paracontact metric one).

Another difference is that, in the contact metric case, $\kappa=1$ (i.e.  $h^2=0$) is also  equivalent to the manifold being Sasakian (and thus $h=0$). However, there are paracontact metric $(\widetilde\kappa,\widetilde\mu)$-spaces with $\widetilde\kappa=-1$ (and thus $\widetilde{h}^2=0$) but $\widetilde h\neq0$. The first example of paracontact metric $(-1,2)$-space $(M^{2n+1},\widetilde\phi, \widetilde\xi, \widetilde\eta, \widetilde{g})$ with $\widetilde{h}\neq0$ was given in  \cite{mino-kodai} for $n=2$. Later, an example with arbitrary $n$ appeared in \cite{CKM} (constructed by deforming the contact metric structure defined on the unit tangent sphere bundle) and  a numerical example with $n=1$ was shown in \cite{murathan}. It is worth mentioning that all these spaces have $\widetilde\mu=2$ and $\text{rank} (\widetilde{h})=n$. Lastly, an example of $3$-dimensional paracontact metric $(-1,0)$-space with $\widetilde{h}\neq0$ appeared in \cite{CP}.

To our knowledge, no effort has been made to better understand the general behaviour of the tensor $h$ of a paracontact metric $(\kappa,\mu)$-space when $h^2=0$ but $h\neq0$, which we will address in Theorem \ref{th-h}.  We will study the form of the tensor $h$ in this remarkable situation and we will later construct explicit examples that illustrate all the possible constant values of the rank of $h$ (from $1$ to $n$) when $\mu=2$. Finally, we will discuss the situation when $\mu\neq2$ and show some paracontact metric $(-1,0)$-spaces with $h\neq0$ in Examples \ref{ex-mu0-h1}--\ref{ex-dim7-mu0-rank3}. These are the first examples of this type with $\mu\neq2$ and dimension greater than $3$.

\section{Preliminaries}\label{sec-preliminaries}

An \emph{almost paracontact structure} on a
$(2n+1)$-dimensional smooth manifold $M$ is given by a
$(1,1)$-tensor field $\varphi$, a vector field $\xi$ and a
$1$-form $\eta$ satisfying the following conditions \cite{kaneyuki}:
\begin{enumerate}
  \item[(i)] $\eta(\xi)=1$, \ $\varphi^2=I-\eta\otimes\xi$,
  \item[(ii)] the eigendistributions ${\mathcal D}^+$ and ${\mathcal D}^-$ of $\varphi$ corresponding to the eigenvalues $1$ and $-1$, respectively, have equal dimension $n$.
\end{enumerate}

As an immediate consequence, $\varphi\xi=0$, $\eta\circ\varphi=0$ and the tensor $\varphi$ has constant rank $2n$.
If an almost paracontact manifold is endowed with  a semi-Riemannian metric $ g$ such that
\[
g(\varphi X,\varphi Y)=- g(X,Y)+\eta(X)\eta(Y),
\]
for all  $X,Y$ on $M$, then $(M,\varphi,\xi,\eta, g)$ is called an \emph{almost paracontact metric manifold}. Note that such a
semi-Riemannian metric is necessarily of signature $(n+1,n)$ and the above condition (ii) of the definition of almost paracontact structures is
automatically satisfied. Moreover,  it follows easily that $\eta=g(\cdot,\xi)$ and
${g}(\cdot,\varphi\cdot)=-{g}(\varphi\cdot,\cdot)$. We can now define the \emph{fundamental $2$-form} of the almost paracontact metric manifold by
$\Phi(X,Y)={g}(X,\varphi Y)$. If $d\eta=\Phi$, then $\eta$ becomes a contact form (i.e. $\eta \wedge (d\eta)^n \neq0$) and $(M,\varphi,\xi,\eta, g)$ is said to be a \emph{paracontact
metric manifold}.

We can also define on a paracontact metric manifold the tensor field ${h}:=\frac{1}{2}L_{\xi}\varphi$, which is a symmetric operator with respect to ${g}$, anti-commutes with $\varphi$  and satisfies $h\xi=\text{tr} h=0$ and the  identity $\nabla\xi=-\varphi+\varphi h$ (\cite{zamkovoy}).  Moreover,  it  vanishes identically if and only if $\xi$ is a Killing vector
field, in which case $(M,\varphi,\xi,\eta, g)$ is called a \emph{K-paracontact manifold}.

An almost paracontact structure is said to be \emph{normal} if and only if the tensor $N_{\varphi}-2d\eta\otimes\xi$ vanishes identically, where $N_{\varphi}$ is the Nijenhuis tensor of $\varphi$: $N_{\varphi}(X,Y)=[\varphi,\varphi](X,Y)=\varphi^2 [X,Y]+[\varphi X,\varphi Y]-\varphi [\varphi X,Y]-\varphi [X,\varphi Y]$ (\cite{zamkovoy}). A normal paracontact metric manifold is said to be a \emph{paraSasakian manifold} and satisfies
\begin{equation}\label{parasasakian-r}
R(X,Y)\xi =-(\eta(Y)X-\eta(X)Y),
\end{equation}
for every $X,Y$ on $M$. Unlike in the contact metric case, the condition \eqref{parasasakian-r} does not imply that the manifold is paraSasakian, as will be seen in Examples \ref{ex-mu0-h1}--\ref{ex-dim7-mu0-rank3}.

It was also proved in \cite{zamkovoy} that an almost paracontact manifold is paraSasakian if and only if
\begin{equation}\label{parasasakian-phi}
(\nabla_{X}\varphi)Y=- g(X,Y)\xi+\eta(Y)X,
\end{equation}
so, in particular, every paraSasakian manifold is $K$-paracontact. The converse holds in dimension $3$ (\cite{calvaruso}) and for $(-1,\mu)$-spaces (which will be proved in Theorem \ref{th-sasakian}) but we can construct explicit examples that show that these two concepts are not equivalent in general.

\begin{example}\label{ex-k-contact-not-sasaki}
Let $\mathfrak{g}$ be the 5-dimensional Lie algebra with basis $\{\xi,X_1,Y_1,X_2,Y_2 \}$ such that the only non-vanishing Lie brackets are
\[
[X_1,Y_1]=2\xi, \quad [X_2,Y_2]=2\xi, \quad [X_1,X_2]=Y_1.
\]
If we denote by $G$ the Lie group whose Lie algebra is $\mathfrak{g}$, we can define a left-invariant paracontact metric structure on $G$ the following way:
\[
\varphi \xi=0,  \quad \varphi X_i=X_i,  \quad \varphi Y_i=-Y_i, \quad  \eta(\xi)=1,  \quad \eta(X_i)=\eta(Y_i)=0, \quad i=1,2,
\]
\[
g(\xi,\xi)=g(X_1,Y_1)=g(X_2,Y_2)=1,  \quad  g(\xi,X_i)=g(\xi,Y_i)=g(X_i,X_j)=g(Y_i,Y_j)=0, \quad i=1,2.
\]
A simple computation gives that $h=0$, so the manifold is $K$-paracontact.

However, although $R(X,\xi)\xi=-X$ for all vector field $X$ orthogonal to $\xi$, straightforward computations give that $R(X_1,X_2)\xi=-2Y_1\neq0$, so the manifold does not satisfy \eqref{parasasakian-r}. In particular, it is not paraSasakian.
\end{example}

%%A paracontact metric manifold is said to be \emph{integrable}, or \emph{para-CR}, if the following condition is satisfied, for any $X,Y$ on $M$,
%%\begin{equation*}
%%(\nabla_{X}\varphi)Y=\eta(Y)(X-{h}X)-{g}(X-{h}X,Y)\xi.
%%\end{equation*}
%%Clearly, for K-paracontact manifolds the notion of integrability coincides with that of being paraSasakian.

Finally, we recall that a notion of  ${\mathcal D}_c$-homothetic deformation can be introduced in paracontact metric geometry
(\cite{zamkovoy}). Indeed, given a non-zero constant $c$, a \emph{${\mathcal D}_{c}$-homothetic deformation} on a paracontact metric manifold $(M,\varphi,\xi,\eta,g)$ is the following change of the structure tensors:
\begin{equation}\label{deformation}
\varphi':=\varphi, \quad \xi':=\frac{1}{c}\xi, \quad \eta':=c \eta, \quad g':=c g + c(c-1)\eta\otimes\eta.
\end{equation}
Then $(\varphi',\xi',\eta',g')$ is again a paracontact metric structure on $M$. Moreover, $K$-paracontact and paraSasakian structures are also preserved under ${\mathcal D}_c$-homothetic deformations.

Although $D_c$-homothetic deformations destroy curvature conditions like $R(X,Y)\xi=0$, a $D_c$-homothetically deformed paracontact metric $(\kappa,\mu)$-space is another paracontact metric $(\kappa',\mu')$-space with
\begin{equation}\label{kappamu-deformed}
\kappa'=\frac{\kappa+1-c^2}{c^2}, \quad \mu'=\frac{\mu-2+2c}{c}.
\end{equation}

\section{Paracontact metric $(-1,\mu)$-spaces}

We now present our main results.

\begin{theorem}\label{th-sasakian}
Let $M$ be a paracontact metric manifold such that $R(X,Y)\xi=-(\eta(Y)X-\eta(X)Y)$, for all $X,Y$ on $M$. Then $M$ is paraSasakian if and only if $\xi$ is a Killing vector field.
\end{theorem}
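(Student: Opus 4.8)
The plan is to treat the two implications separately; the forward direction is immediate and essentially all of the work lies in the converse.

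If $M$ is paraSasakian then by \eqref{parasasakian-phi} it is in particular $K$-paracontact, i.e. $\xi$ is Killing (equivalently $h=0$). This direction uses none of the curvature hypothesis and follows at once from the facts recalled in Section~\ref{sec-preliminaries}. For the converse I would assume $\xi$ Killing, so that $h=0$ and hence $\nabla_X\xi=-\varphi X$ by the structural identity $\nabla\xi=-\varphi+\varphi h$. The target is to reconstruct the paraSasakian characterization \eqref{parasasakian-phi}. Differentiating $\nabla_X\xi=-\varphi X$ and using that the Levi-Civita connection is torsion-free gives the compact expression $R(X,Y)\xi=(\nabla_Y\varphi)X-(\nabla_X\varphi)Y$; comparing this with the hypothesis \eqref{parasasakian-r} then produces an explicit formula for the skew part of $\nabla\varphi$. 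Writing $T(X,Y,Z):=g((\nabla_X\varphi)Y,Z)$, the comparison yields, for every triple of arguments,
\[
T(Y,X,Z)-T(X,Y,Z)=\eta(X)g(Y,Z)-\eta(Y)g(X,Z).
\]

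The main obstacle is that this relation only controls the antisymmetrization of $\nabla\varphi$ in its first two slots, so it cannot by itself determine $T$; indeed $K$-paracontact does not imply paraSasakian in general (Example~\ref{ex-k-contact-not-sasaki}), so some genuinely new input is required. The extra ingredient I would bring in is the closedness of the fundamental form: since $\Phi=d\eta$ we have $d\Phi=0$, which translates into the cyclic relation $T(X,Z,Y)+T(Y,X,Z)+T(Z,Y,X)=0$. Together with the skew-symmetry $T(X,Y,Z)=-T(X,Z,Y)$, obtained by differentiating $g(\varphi\cdot,\cdot)=-g(\cdot,\varphi\cdot)$, these three families of identities over-determine $T$.

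A routine Koszul-type elimination among the six permutations of $(X,Y,Z)$ then isolates $T(X,Y,Z)=\eta(Y)g(X,Z)-\eta(Z)g(X,Y)$, that is $(\nabla_X\varphi)Y=\eta(Y)X-g(X,Y)\xi$, which is precisely \eqref{parasasakian-phi}; hence $M$ is paraSasakian. I expect the only delicate points to be the bookkeeping in the elimination and verifying the mutual consistency of the three identity families (each of the three displayed relations being applied to all reorderings of the arguments), both of which are mechanical once the indices are set up carefully.
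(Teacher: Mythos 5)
Your proof is correct, but it takes a genuinely different route from the paper's. The paper, after reducing to $h=0$ and $\nabla_X\xi=-\varphi X$, invokes the Killing-field identity $R(\xi,X)Y=-\nabla_X\nabla_Y\xi+\nabla_{\nabla_XY}\xi$ (from O'Neill) to get $R(\xi,X)Y=(\nabla_X\varphi)Y$ outright, and then uses the pair symmetry $g(R(\xi,X)Y,Z)=g(R(Y,Z)\xi,X)$ to convert the hypothesis on $R(\cdot,\cdot)\xi$ directly into \eqref{parasasakian-phi} in two lines. You instead compute only the ``cheap'' curvature component $R(X,Y)\xi=(\nabla_Y\varphi)X-(\nabla_X\varphi)Y$, which pins down just the skew part of $T(X,Y,Z)=g((\nabla_X\varphi)Y,Z)$ in its first two slots, and you recover the full tensor from $d\Phi=0$ and the skew-adjointness $T(X,Y,Z)=-T(X,Z,Y)$. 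All three of your identity families are correct (note $(\nabla_X\Phi)(Y,Z)=T(X,Z,Y)$, so closedness of $\Phi=d\eta$ does give your cyclic relation), and the elimination you describe as ``routine'' is in fact immediate: applying the last-slot skew-symmetry to the first and third terms of the cyclic identity gives $T(Z,X,Y)=T(Y,X,Z)-T(X,Y,Z)=\eta(X)g(Y,Z)-\eta(Y)g(X,Z)$, which after relabelling is exactly $(\nabla_X\varphi)Y=-g(X,Y)\xi+\eta(Y)X$. What your version buys is self-containedness (no appeal to the Kostant-type formula for Killing fields, and no use of the pair symmetry of $R$) and a clear accounting of where the extra input beyond $K$-paracontactness enters, consistent with Example \ref{ex-k-contact-not-sasaki}; what the paper's version buys is brevity. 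Both are valid.
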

\begin{proof}
If $M$ is paraSasakian, it is in particular a $K$-paracontact manifold, hence $\xi$ is a Killing vector field.

Conversely, if $\xi$ is a Killing vector field, then $h=0$ and $\nabla_X \xi=-\varphi X$ holds for every $X$ on $M$ (\cite{zamkovoy}). We also know from \cite[p.259]{O} that
\[
R(\xi,X)Y=-\nabla_X\nabla_Y \xi+\nabla_{\nabla_X Y} \xi,
\]
thus $R(\xi,X)Y=(\nabla_X \varphi)Y$, for all $X,Y$ on $M$.

Therefore, it follows from the previous formula and from $R(X,Y)\xi=-(\eta(Y)X-\eta(X)Y)$  that
\[
g((\nabla_X \varphi)Y,Z)=g(R(\xi,X)Y,Z)=g(R(Y,Z)\xi,X)=g(-g(X,Y)\xi+\eta(Y)X,Z),
\]
hence equation \eqref{parasasakian-phi} holds and the manifold is paraSasakian.
\end{proof}

\begin{theorem}\label{th-h}
Let $M$ be a $(2n+1)$-dimensional paracontact metric $(-1,\mu)$-space. Then we have one of the following possibilities:
\begin{enumerate}
\item
$h=0$ and $M$ is paraSasakian

\item
$h\neq 0$ and $\text{rank} (h_p)\in \{1,\ldots,n \}$ at every $p  \in M$ where $h_p \neq 0$. Moreover, there exists a basis $\{ \xi, X_1,Y_1,\ldots,X_n,Y_n \}$ of $T_p(M)$ such that
\begin{itemize}
\item The only non-vanishing components of $g$ are
\[
g_p(\xi,\xi)=1, \quad g_p(X_i,Y_i)=\pm 1,
\]
\item and
\begin{equation*}
h_{| \langle X_i,Y_i \rangle}=
\begin{pmatrix}
0 & 0\\
1 & 0
\end{pmatrix}
\quad
\text{ or }
\quad
h_{| \langle X_i,Y_i \rangle}=
\begin{pmatrix}
0 & 0\\
0 & 0
\end{pmatrix},
\end{equation*}
where obviously there are exactly $\text{rank} (h_p)$ submatrices of the first type.
\end{itemize}
\end{enumerate}

If $n=1$, such a basis $\{ \xi, X_1,Y_1\}$ also satisfies that
\[
\varphi X_1 =\pm  X_1, \quad \varphi Y_1 =\mp  Y_1,
\]
and the tensor $h$ can be written as
\begin{equation}\label{h-nonzero-dim3}
h_{| \langle\xi, X_i,Y_i \rangle}=
\begin{pmatrix}
0 & 0 & 0\\
0 & 0 & 0\\
0 & 1 & 0
\end{pmatrix}.
\end{equation}

\end{theorem}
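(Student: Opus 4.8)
The plan is to reduce the whole statement to pointwise linear algebra, exploiting only that for a $(-1,\mu)$-space the identity $h^2=(\kappa+1)\varphi^2$ degenerates to $h^2=0$, together with the structural facts recalled in Section \ref{sec-preliminaries}: $h$ is $g$-symmetric, anticommutes with $\varphi$, and satisfies $h\xi=0$. For the dichotomy, if $h\equiv 0$ then $\xi$ is Killing and \eqref{kappamu} with $\kappa=-1$ collapses to $R(X,Y)\xi=-(\eta(Y)X-\eta(X)Y)$, so Theorem \ref{th-sasakian} yields that $M$ is paraSasakian, giving case (1). Otherwise fix a point $p$ with $h_p\neq 0$. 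Since $h_p$ is $g$-symmetric and $g$ is nondegenerate, $\ker h_p=(\operatorname{im} h_p)^{\perp}$; combined with $h_p^2=0$, i.e. $\operatorname{im} h_p\subseteq\ker h_p$, this forces $\operatorname{im} h_p$ to be a totally isotropic subspace. As the metric has signature $(n+1,n)$, a totally isotropic subspace has dimension at most $n$, so $\operatorname{rank} h_p\in\{1,\dots,n\}$.

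For the normal form I would work inside $\mathcal{D}=\ker\eta=\xi^{\perp}$, which is nondegenerate of signature $(n,n)$ and $h$-invariant (here $\eta\circ h=0$ follows from symmetry of $h$ and $h\xi=0$). Put $r=\operatorname{rank} h_p$, choose a basis $Y_1,\dots,Y_r$ of $\operatorname{im} h_p\subseteq\mathcal{D}$ and preimages $X_i$ with $h_pX_i=Y_i$. The essential step is that the matrix $G_{ij}=g(X_i,Y_j)$ is invertible: if $\sum a_iX_i$ were $g$-orthogonal to every $Y_j$ it would lie in $(\operatorname{im} h_p)^{\perp}=\ker h_p$, forcing $\sum a_iY_i=0$ and all $a_i=0$. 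Moreover $G$ is symmetric because $g(X_i,h_pX_j)=g(h_pX_i,X_j)$, so a change of basis among the $X_i$ (transported to the $Y_i=h_pX_i$) diagonalizes $G$ to entries $\pm 1$; then adding suitable combinations of the null vectors $Y_k\in\ker h_p$ clears the remaining $g(X_i,X_j)$ without altering $h_pX_i=Y_i$ or $G$. This yields $r$ mutually orthogonal hyperbolic planes $\langle X_i,Y_i\rangle$, on each of which $h_p$ has matrix $\left(\begin{smallmatrix}0&0\\1&0\end{smallmatrix}\right)$. Their span $V$ is nondegenerate of signature $(r,r)$, so $\mathcal{D}=V\oplus V^{\perp}$ with $V^{\perp}$ neutral of signature $(n-r,n-r)$; since $\operatorname{im} h_p\subseteq V$ we get $h_p(V^{\perp})\subseteq V\cap V^{\perp}=0$, so picking any hyperbolic basis $X_{r+1},Y_{r+1},\dots,X_n,Y_n$ of $V^{\perp}$ completes $\{\xi,X_1,Y_1,\dots,X_n,Y_n\}$ to the required basis, with exactly $r$ blocks of the first type.

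For $n=1$ we necessarily have $r=1$, and the extra assertion is an alignment with the $\varphi$-eigenspaces $\mathcal{D}^{\pm}$. Here I would use $\varphi h_p=-h_p\varphi$: if $\varphi v=\lambda v$ then $\varphi(h_pv)=-\lambda h_pv$, so $h_p$ interchanges the two lines $\mathcal{D}^{+}$ and $\mathcal{D}^{-}$. Because $h_p^2=0$ and $\operatorname{rank} h_p=1$, one of $\mathcal{D}^{\pm}$ lies in $\ker h_p$ while the other maps isomorphically onto $\operatorname{im} h_p$; taking $X_1$ to generate that other line and $Y_1=h_pX_1$ then gives $\varphi X_1=\pm X_1$ and $\varphi Y_1=\mp Y_1$, and rescaling $X_1$ normalizes $g(X_1,Y_1)=\pm 1$, producing \eqref{h-nonzero-dim3}.

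The computation is entirely pointwise, so there is no analytic difficulty; the one genuinely essential step is the invertibility (hence symmetric diagonalizability) of the pairing $G_{ij}=g(X_i,Y_j)$, which converts the abstract isotropy of $\operatorname{im} h_p$ into concrete hyperbolic blocks, together with the careful ordering of the two adjustments so that normalizing $G$ and clearing the $g(X_i,X_j)$ do not interfere.
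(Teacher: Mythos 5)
Your proposal is correct, but it reaches the normal form by a genuinely different route than the paper. The paper invokes O'Neill's classification of self-adjoint operators on semi-Riemannian vector spaces (\cite[p.~260]{O}), decomposing $\ker\eta_p$ into mutually orthogonal $h$-invariant blocks of the two canonical types \eqref{type1} and \eqref{type2}, and then uses $h_p^2=0$ to rule out type \eqref{type2} entirely and to force the type \eqref{type1} blocks to have $\lambda=0$ and dimension at most $2$; the $1$-dimensional blocks are then paired off into hyperbolic planes. You instead give a self-contained construction: from $g$-symmetry and $h_p^2=0$ you deduce that $\operatorname{im}h_p\subseteq\ker h_p=(\operatorname{im}h_p)^{\perp}$ is totally isotropic (which already gives $\operatorname{rank}h_p\le n$), and then you build the hyperbolic blocks by hand from the nondegenerate symmetric pairing $G_{ij}=g(X_i,Y_j)$ between preimages and images, using the isotropy of $\operatorname{im}h_p$ to adjust the $X_i$ by kernel vectors without disturbing $G$ or the relations $h_pX_i=Y_i$. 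Your approach is more elementary and avoids the external canonical-form theorem, at the cost of having to verify the invertibility and symmetry of $G$ and the non-interference of the two basis adjustments (all of which you do correctly, including the $h$-invariance of $V^{\perp}$); the paper's approach is shorter once the cited classification is granted. The dichotomy via Theorem~\ref{th-sasakian} is identical, and your $n=1$ argument (using that $\varphi h=-h\varphi$ makes $h_p$ swap the lines $\mathcal{D}^{\pm}$, so exactly one of them is killed) is a clean variant of the paper's computation, which instead derives $\varphi X_1=aX_1$ from $g_p(X_1,\varphi X_1)=d\eta_p(X_1,X_1)=0$ and then shows $a^2=1$. Both are valid.
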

\begin{proof}
We know from Lemma 3.2 of \cite{CKM} that $h^2=0$. We have now two possibilities.

If $h=0$, then $R(X,Y)\xi=-(\eta(Y)X-\eta(X)Y)$, for all $X,Y$ on $M$ and $\xi$ is a Killing vector field. Therefore, it follows from Theorem~\ref{th-sasakian} that the manifold is paraSasakian.

Let us now suppose that $h\neq0$. Since $h$ is self-adjoint and $\text{Ker} (\eta)$ is $h$-invariant, we have from \cite[p.260]{O} that, at each point $p \in M$, $\text{Ker} (\eta_p)=V_1 \oplus \cdots \oplus V_l$ (for some $1\leq l\leq 2n$), where $V_k$ are mutually orthogonal subspaces that are $h$-invariant and on which $h_{|V_k}$ has a matrix of either type:
\begin{equation}\label{type1}
\left(
\begin{array}{ccccc}
\lambda &          &         &          &\\
1       & \lambda  &         &\mathbf{0}&\\
        &  1       & \lambda &          &\\
        &          & \ddots  &\ddots    &\\
        &\mathbf{0}&         &1         &\lambda
\end{array}
\right)
\end{equation}
relative to a basis $X_1,\ldots, X_r$ of $V_k$, $r\geq 1$, such that the only non-zero products are $g_p(X_i,X_j)=\pm 1$ if $i+j=r+1$, or of type
\begin{equation}\label{type2}
\left(
\begin{array}{ccccccccccc}
a  & b &    &   &    &    &       &    &    &    & \\
-b & a &    &   &    &    &       &    &  \mathbf{0}  &    & \\
1  & 0 & a  & b &    &    &       &    &    &    & \\
0  & 1 & -b & a &    &    &       &    &    &    & \\
   &   & 1  & 0 & a  & b  &       &    &    &    & \\
   &   & 0  & 1 & -b & a  &       &    &    &    & \\
   \\
   &   &    &   &    &\ddots&     &\ddots&  &    & \\
   &   &  \mathbf{0}  &   &    &    &       &    &    &    & \\
   &   &    &   &    &    &       & 1  & 0  & a  & b\\
   &   &    &   &    &    &       & 0  & 1  & -b & a
\end{array}
\right)
(b \neq 0)
\end{equation}
relative to a basis $X_1,Y_1,\ldots,X_m,Y_m$ of $V_k$, such that the only non-zero products are $g_p(X_i,X_j)=1=-g_p(Y_i,Y_j)$ if $i+j=m+1$.

We will first see that the second case is not possible. Indeed, if there existed a subspace $V_k$ such that $h_{|V_k}$ had a matrix of  type \eqref{type2}, then $h_p^2 X_1=(a^2-b^2)X_1-2ab Y_1+2a X_2-2b Y_2+X_3=0$, which cannot happen for any value of $m$  because $b\neq0$.

If there exists a subspace $V_k$ such that $h_{|V_k}$ has a matrix of  type \eqref{type1}, then $h_p^2 X_1=\lambda^2 X_1+2\lambda X_2+X_3=0$, which is only possible if $\lambda=0$ and $\dim V_k =r\leq2$. Let us distinguish between both subcases:
\begin{enumerate}
\item\label{Type-Ib} If $\dim V_i=2$, then $V_i=\langle X_i,Y_i \rangle$, the only non-zero product is $g_p(X_i,Y_i)=\pm 1$ and
\[
h_{| \langle X_i,Y_i \rangle }=
\begin{pmatrix}
0 & 0\\
1 & 0
\end{pmatrix}.
\]

\item\label{Type-Ia} If $\dim V_i=1$, then $V_i=\langle X_i \rangle$ and $h_pX_i=0$, with $X_i$ a vector satisfying $g_p(X_i,X_i)=\pm1$. In fact, since $\text{Ker} (\eta_p)$ is of signature $(n,n)$ and the subspaces $V_i$ of the subcase~\eqref{Type-Ib} are of dimension two and signature $(1,1)$, then there is an even number of subspaces of this type, half satisfying  $g_p(X_i,X_i)=1$ and half satisfying $g_p(X_i,X_i)=-1$. Taking one of each type, for example $V_1=\langle X_1 \rangle$ and $V_2=\langle X_2 \rangle$ with $g_p(X_1,X_1)=1=-g_p(X_2,X_2)$, a simple change of basis like $\widetilde{X_1}=\frac{1}{\sqrt2}(X_1+X_2)$ and $\widetilde{Y_1}=\frac{1}{\sqrt2}(X_1-X_2)$   would give us a basis such that the only non-vanishing component of the metric is $g_p(\widetilde{X}_1,\widetilde{Y}_1)=1$.
\end{enumerate}

Finally, since $h|_{V_i}=0$ in the second subcase, the rank of $h_p$ depends on the number of submatrixes of the first type, which have rank $1$. Therefore, $\text{rank}(h_p) \in \{1,\ldots,n \}$ at every point where $h_p \neq 0$.

In dimension $3$, the conditions over $\varphi$ are the only ones that remain to be proved. First of all, $\varphi \xi=0$ on every paracontact metric structure. Hence $\varphi X_1=aX_1+bY_1$, for some constants $a,b$.

It follows from $g_p(X_1,\varphi X_1)=d\eta_p(X_1,X_1)=0$ that $b=0$, so $\varphi X_1=aX_1$. On the other hand, $\varphi Y_1= \varphi h X_1=-h \varphi X_1=-a h X_1=-aY_1$, hence $\varphi Y_1=-aY_1$. We can then compute:
\begin{align*}
g_p(\varphi X_1,\varphi Y_1)&=-g_p(X_1,Y_1)=\mp 1, \\
g_p(\varphi X_1,\varphi Y_1)&=g_p(a X_1,-aY_1)=-a^2 g_p(X_1,Y_1)=\mp a^2,
\end{align*}
so $a^2=1$, and thus
\[
\varphi X_1 =\pm  X_1, \quad \varphi Y_1 =\mp  Y_1,
\]
which ends the proof.
\end{proof}

\begin{remark}
A paracontact metric manifold satisfying
 \begin{equation} \label{r-0}
R(X,Y)\xi=-(\eta(Y)X-\eta(X)Y),
\end{equation}
for all $X,Y$ on $M$, can be either a $(-1,\mu)$-space with $h=0$ (thus  paraSasakian by the previous Theorem and $\mu$ is undetermined) or a paracontact metric $(-1,0)$-space with $h\neq0$ (not $K$-paracontact or paraSasakian).
This last case is possible because \eqref{r-0} does not imply $h=0$, as can be seen in Examples \ref{ex-mu0-h1}--\ref{ex-dim7-mu0-rank3}.
\end{remark}

%%%%%%%%%%%%%%%%%%%%%%%%%%%%%%%%%%%%%%%%%
%%%%%%%%%%%%%%%%%%%%%%%%%%%%%%%%%%%%%%%%%

Let us now see examples of all the possible constant ranks of $h$ that appear in Theorem \ref{th-h}. First of all, if $h=0$,  the standard examples of paraSasakian manifolds are the hyperboloids
\begin{equation*}
\mathbb{H}^{2n+1}_{n+1}(1)=\left\{(x_{0},y_{0},\ldots,x_{n},y_{n})\in\mathbb{R}^{2n+2} \ | \
x_{0}^{2}+\ldots+x_{n}^{2}-y_{0}^{2}-\ldots-y_{n}^{2}=1\right\}
\end{equation*}
and the hyperbolic Heisenberg group ${\mathcal H}^{2n+1}=\mathbb{R}^{2n}\times\mathbb{R}$ with the structures defined in \cite{ivanov}. Other examples of ($\eta$-Einstein) paraSasakian manifolds can be obtained from contact $(\kappa,\mu)$-spaces with $|1-\frac{\mu}{2}|<\sqrt{1-\kappa}$, as seen in Theorem~3.4 of \cite{nuestro-mino}. In particular, it was shown that the tangent sphere bundle $T_1N$ of any space form $N(c)$ with $c<0$ admits a canonical $\eta$-Einstein paraSasakian structure.

We can also construct paraSasakian examples by defining a paracontact metric structure on a Lie group:
\begin{example}[Canonical paraSasakian structure on the Heisenberg algebra]
Let $\mathfrak{g}$ be the $(2n+1)$-dimensional Lie algebra with basis $\{\xi,X_1,\ldots,X_{2n} \}$ such that the only non-vanishing Lie brackets are
\[
[X_{2i-1},X_{2i}]=2\xi, \quad  i=1,\ldots,n.
\]
If we denote by $G$ the Lie group whose Lie algebra is $\mathfrak{g}$, we can define a left-invariant paracontact metric structure on $G$ the following way:
\[
\varphi \xi=0,  \quad \varphi X_{2i-1}=X_{2i}, \quad \varphi X_{2i}=X_{2i-1},  \quad i=1,\ldots,n,
\]
\[
 \eta(\xi)=1,  \quad \eta(X_i)=0, \quad  i=1,\ldots,2n,
\]
the only non-vanishing components of the metric are
\[
g(\xi,\xi)=g(X_{2i},X_{2i})=1, \quad g(X_{2i-1},X_{2i-1})=-1, \quad i=1,\ldots,n.
\]
A straightforward computation gives that $h=0$. Moreover, using properties of paracontact metric manifolds and Koszul's formula, we obtain that
\[
\nabla_{X_{2i}} \xi=-X_{2i-1}, \quad \nabla_{X_{2i-1}} \xi=-X_{2i}, \quad i=1,\ldots,n,
\]
\[
\nabla_{X_{2i}} X_{2i}=\nabla_{X_{2i-1}} X_{2i-1}=0, \quad \nabla_{X_{2i-1}} X_{2j}=-\nabla_{X_{2i}} X_{2j-1}=\delta_{ij} \xi, \quad i=1,\ldots,n.
\]
Therefore,
\begin{align*}
R(X_i,\xi)\xi &=-X_i, \quad i=1,\ldots,2n,\\
R(X_i,X_j)\xi &=0,  \quad   i,j=1,\ldots,2n.
\end{align*}
We conclude that $R(X,Y)\xi=-(\eta(Y)X-\eta(X)Y)$ for all $X,Y$ on $M$, thus the manifold is paraSasakian because of Theorem~\ref{th-sasakian}. Alternatively, we could check the normality condition by proving that
$[\varphi, \varphi](X,Y)-2 d \eta (X,Y) \xi=0$ for all vector fields $X,Y$ on $M$.
\end{example}

If we apply a $D_c$-homothetic deformation to any of the previous paraSasakian examples, we obtain again paraSasakian manifolds (\cite{zamkovoy}).

%%%%%%%%%%%%%%%%%%%%%%%%%%%%%%%%%%%%%5
%%%%%%%%%%%%%%%%%%%%%%%%%%%%%%%%%%%

%%%%%%%%%%%%%%%%%%%%%%%%%%%%%%
%%%%%%%%%%%%%%%%%%%%%%%%%%%

\medskip

Let us now construct some examples of paracontact metric $(-1,\mu)$-spaces with $h\neq0$. We will begin by adapting some paracontact metric $(-1,2)$-spaces with $h\neq0$ that appear in the literature and will afterwards construct explicitly $(2n+1)$-dimensional paracontact metric $(-1,2)$-spaces with $h\neq0$ such that rank of $h$ attains all values in $\{1,\ldots,n\}$. Lastly, the case $\mu\neq2$ will be discussed.

\begin{example}[$(2n+1)$-dimensional paracontact metric $(-1,2)$-space, $\text{rank}(h)=n$] \label{ex-tangent-mu2}
We will begin with the examples that were presented in \cite{CKM}. Let us take a flat Riemannian manifold $M$ and construct on it the tangent sphere bundle $T_1M$ with its standard contact metric structure $(\varphi,\xi,\eta,g)$, which satisfies $R(X,Y)\xi=0$ for every $X,Y$ on $T_1M$ (see \cite{blair95}).  Then we can define a new structure by taking
\begin{equation*}
\varphi_2=h, \quad g_2=d\eta(\cdot,h\cdot)+\eta \otimes \eta,
\end{equation*}
which is a paracontact metric $(-1,2)$-space (Theorem 3.4 of \cite{CKM}). We will now see the form that $h_2$ has on these examples.

Let us first take a $\varphi$-basis $\{ \xi,X_1,\varphi X_1,\ldots,X_n,\varphi X_n \}$ of the contact metric $(\kappa,\mu)$-space such that $hX_i=X_i$ and $h \varphi X_i=-\varphi X_i$ (which exists because of \cite{blair95}). Then  $\{ \xi,\widetilde{X}_1, \widetilde{Y}_1,\ldots,\widetilde{X}_n, \widetilde{Y}_n \}$, where $\widetilde{X}_i:=\frac{1}{\sqrt2}X_i$, $\widetilde{Y}_i:=\sqrt2 \varphi X_i$, is a basis for which the only non-vanishing components of the metric $g_2$ are
\[
g_2(\xi,\xi)=1, \quad g_2(\widetilde{X}_i,\widetilde{Y}_i)= 1,
\]
the tensor $\varphi_2$ satisfies
\[
\varphi_2 \widetilde{X}_i =  \widetilde{X}_i, \quad \varphi_2 \widetilde{Y}_i =-  \widetilde{Y}_i,
\]
and
\[
{h_2}_{| \langle \widetilde{X}_i,\widetilde{Y}_i \rangle }=
\begin{pmatrix}
0 & 0\\
1 & 0
\end{pmatrix},
\]
for all $i=1,\ldots,n$.

Therefore,
\[
h_2=
\begin{pmatrix}
0  &   &     &   &        &    &\\
   & 0 & 0   &   &        &    & \\
   & 1 & 0   &   &        &    & \\
   &   &     &   &\ddots^{\scriptstyle{(n)}}  &    & \\
   &   &     &   &        & 0  & 0\\
   &   &     &   &        & 1  & 0
\end{pmatrix},
\]
so $\text{rank}(h_2)=n$.
\end{example}

In dimension 3, we also have Example 6.2 from \cite{murathan}, which is a $3$-dimensional paracontact metric $(-1,2)$-space with $h\neq0$ satisfying \eqref{h-nonzero-dim3}.

%%%%%%%%%%%%%%%%%%%%%%%%%%%%%%%%%%%%%%%%%%%%%%%%%%%%%%%%
%%%%%%%%%%%%%%%%%%%%%%%%%%%%%%%%%%%%%%%%%%%%%%%%%%%%%%
\medskip

On the other hand, we can give examples using left-invariant paracontact metric structures on Lie groups of $(2n+1)$-dimensional paracontact metric $(-1,2)$-spaces with $h\neq0$ and $\text{rank}(h)=m \in \{1,\ldots,n \}$.

\begin{example}[$(2n+1)$-dimensional paracontact metric $(-1,2)$-space with $\text{rank}(h)=m \in \{1,\ldots,n \}$] \label{ex-mu2-hm-n}

Let $\mathfrak{g}$ be the $(2n+1)$-dimensional Lie algebra with basis $\{\xi,X_1,Y_1,\ldots,X_{n},Y_{n} \}$ such that the only non-vanishing components are
\begin{align*}
[\xi,X_i]=Y_i, \quad [X_i,Y_j]&=
\left\{
\begin{array}{l}
\delta_{ij} (2\xi +\sqrt2 (1+\delta_{im})Y_m)+(1-\delta_{ij})\sqrt2 (\delta_{im}Y_j+\delta_{jm} Y_i),  \quad i,j=1,\ldots,m,\\
\delta_{ij}(2\xi+\sqrt2 Y_i),                                                                          \quad i,j=m+1, \ldots,n, \\
\sqrt2 Y_i                                                                                             \quad i=1, \ldots,m, \; j=m+1, \ldots,n.
\end{array}
\right.
\end{align*}

If we denote by $G$ the Lie group whose Lie algebra is $\mathfrak{g}$, we can define a left-invariant paracontact metric structure on $G$ the following way:
\[
\varphi \xi=0,  \quad \varphi X_i=X_i, \quad  \varphi Y_i=-Y_i,  \quad i=1, \ldots,n,
\]
\[
\eta(\xi)=1,  \quad \eta(X_i)=\eta(Y_i)=0, \quad i=1,\ldots,n.
 \]
The only non-vanishing components of the metric are
\[
g(\xi,\xi)=g(X_i,Y_i)=1, \quad i=1,\ldots,n.
\]
A straightforward computation gives that $h X_i=Y_i$ if $i=1,\ldots,m$, $h X_i=0$ if $i=m+1,\ldots,n$ and $h Y_j=0$ if  $j=1, \ldots,n$, so $h^2=0$ and $\text{rank}(h)=m$.

Moreover, by basic paracontact metric properties we obtain that $\nabla_{X_i} \xi=-X_i-Y_i$ if $i=1,\ldots,m$, $\nabla_{X_i} \xi=-X_i$ if $i=m+1,\ldots,n$ and $\nabla_{Y_i} \xi=Y_i$ if $i=1,\ldots,n$, from which we deduce that
\begin{align*}
R(X_i,\xi)\xi &=-X_i+2Y_i=-X_i+2hX_i, \quad i=1,\ldots,m,\\
R(X_i,\xi)\xi &=-X_i, \quad i=m+1,\ldots,n,\\
R(Y_i,\xi)\xi &=-Y_i, \quad i=1,\ldots,n.
\end{align*}
Therefore, $R(X,\xi)\xi=-X+2hX$ for every $X$ orthogonal to $\xi$.

Using Koszul's formula, we can compute $\nabla_{X_i} Y_j$ and $\nabla_{Y_i} Y_j$ for any $i,j$:
\[
\begin{aligned}
\nabla_{X_i} Y_j&=
\left\{
\begin{array}{ll}
\delta_{ij} (\xi+\sqrt2 (1+\delta_{im} Y_m))+(1-\delta_{ij})\sqrt2 (\delta_{im} Y_j+\delta_{jm} Y_i), & i,j=1,\ldots,m, \\
\sqrt2 Y_i,                   & i=1,\ldots,m,\; j=m+1,\ldots,n, \\
0,                            & i=m+1,\ldots,n,\; j=1,\ldots,m,  \\
\delta_{ij} (\xi+\sqrt2 Y_i), & i,j=m+1,\ldots,n,
\end{array}
\right.
\\
\nabla_{Y_i} Y_j&=0,                            \quad i,j=1,\ldots,n.
\end{aligned}
\]
It follows that $R(X_i,X_j)\xi=R(X_i,Y_j)\xi=R(Y_i,Y_j)\xi=0$ for any $i,j=1,\ldots,n$, which is enough to conclude that the manifold is a $(-1,2)$-space.
\end{example}

If we take $m=0$ in the previous example, we will obtain a $(2n+1)$-dimensional paracontact metric $(-1,2)$-space such that $\text{rank}(h)=n$, as in Example \ref{ex-tangent-mu2}. However, the construction of our manifold has been considerably different.

We can also change the Lie algebra and paracontact structure of Example \ref{ex-mu2-hm-n} to obtain other possibilities. For example, in the particular case $m=1$, we have the following one.

\begin{example}[$(2n+1)$-dimensional paracontact metric $(-1,2)$-space with  $\text{rank}(h)=1$] \label{ex-mu2-h1}
Let $\mathfrak{g}$ be the $(2n+1)$-dimensional Lie algebra with basis $\{\xi,X_1,Y_1,\ldots,X_n,Y_n \}$ such that the only non-vanishing components are
\begin{gather*}
[\xi,X_1]=Y_1, \quad [X_1,Y_1]=2 \xi,  \\
[X_i,Y_i]=2(\xi+X_i), \quad [X_1,X_i]=Y_1, \quad i=2,\ldots,n.
\end{gather*}

If we denote by $G$ the Lie group whose Lie algebra is $\mathfrak{g}$, we can define a left-invariant paracontact metric structure on $G$ the following way:
\[
\varphi \xi=0,  \quad \varphi X_1=X_1, \quad  \varphi Y_1=-Y_1,  \quad\varphi X_i=-X_i, \quad  \varphi Y_i=Y_i, \quad i=2, \ldots,n,
\]
\[
\eta(\xi)=1,  \quad \eta(X_i)=\eta(Y_i)=0, \quad i=1,\ldots,n.
 \]
The only non-vanishing components of the metric are
\[
g(\xi,\xi)=g(X_1,Y_1)=1, \quad g(X_i,Y_i)=-1, \quad i=2,\ldots,n.
\]
A straightforward computation gives that $h X_1=Y_1$, $h Y_1=0$ and $h X_i=hY_i=0$, $i=2, \ldots,n$, so $h^2=0$ and $\text{rank}(h)=1$.

Moreover, by basic paracontact metric properties and Koszul's formula we obtain that
\begin{gather*}
\nabla_\xi X_1=-X_1,  \quad \nabla_\xi Y_1=Y_1, \quad \nabla_\xi X_i=X_i, \quad  \nabla_\xi Y_i=-Y_i, \quad i=2,\ldots,n,\\
\nabla_{Y_i} Y_j=2\delta_{ij}(1-\delta_{i1})Y_i, \quad  \nabla_{X_i} Y_j=\delta_{ij} \xi, \quad \nabla_{Y_i} X_j=\delta_{ij} (-\xi+2(1-\delta_{i1})X_i), \quad i,j=1,\ldots,n,\\
\nabla_{X_i} X_1=0, \quad \nabla_{X_1} X_j=Y_1, \quad i=2,\ldots,n,
\end{gather*}
which is enough to prove that
\begin{align*}
R(X_1,\xi)\xi &=-X_1+2Y_1=-X_1+2hX_1,\\
R(X_i,\xi)\xi &=-X_i, \quad i=2,\ldots,n,\\
R(Y_i,\xi)\xi &=-Y_i, \quad i=1,\ldots,n,\\
R(X_i,X_j)\xi &=R(X_i,Y_j)\xi=R(Y_i,Y_j)\xi=0, \quad i,j=1,\ldots,n.
\end{align*}
Therefore, the manifold is also a $(-1,2)$-space.
\end{example}

Note that all the examples constructed until now and most of the ones appearing in the literature of paracontact metric $(-1,\mu)$-spaces with $h^2=0$ but $h\neq0$  have $\mu=2$. Why is this particular value so important? First of all, given a non-Sasakian contact metric $(\kappa,\mu)$-space, we can define two canonical paracontact metric structures on $M$, $(\widetilde{\varphi}_1,\xi,\eta,\widetilde{g}_1)$ and $(\widetilde{\varphi}_2,\xi,\eta,\widetilde{g}_2)$, in the following way (\cite{mino-kodai} and \cite{mino-pacific}):
\begin{align*}
\widetilde\varphi_{1}&:=\frac{1}{\sqrt{1-\kappa}}\varphi h, &\quad
\widetilde{g}_{1}&:=\frac{1}{\sqrt{1-\kappa}}g(\cdot, h \cdot)+\eta\otimes\eta,  \\
\widetilde\varphi_{2}&:=\frac{1}{\sqrt{1-\kappa}}h, &\quad
\widetilde{g}_{2}&:=\frac{1}{\sqrt{1-\kappa}}g(\cdot,\varphi h \cdot)+\eta\otimes\eta.
\end{align*}
Moreover, these new structures are paracontact metric $(\widetilde{\kappa}_i,\widetilde{\mu}_i)$-spaces with
\begin{align*}
\widetilde\kappa_1&=\left(1-\frac{\mu}{2}\right)^2-1, & \widetilde\mu_1&=2\left(1-\sqrt{1-\kappa}\right), & \widetilde{h}_1&=-\frac{1-\frac{\mu}2}{\sqrt{1-\kappa}} h,\\
\widetilde\kappa_2&=\kappa-2+\left(1-\frac{\mu}{2}\right)^2, & \widetilde\mu_2&=2, & \widetilde{h}_2&=\frac{1-\frac{\mu}2}{\sqrt{1-\kappa}} \varphi h +\sqrt{1-\kappa} \varphi.
\end{align*}
Therefore, $\widetilde\kappa_1=-1$ if and only if $\mu=2$, which is equivalent to $(\widetilde{\varphi}_1,\xi,\eta,\widetilde{g}_1)$ being paraSasakian (\cite[Th. 5.9]{mino-kodai} or Theorem \ref{th-h}).  On the other hand, $\widetilde\kappa_2=-1$ if and only if $\sqrt{1-\kappa}=1-\frac{\mu}2$, from which it follows that $h_2=\varphi h+(1-\frac{\mu}{2})\varphi$. This means that the only possibility of constructing a paracontact metric $(\widetilde\kappa,\widetilde\mu)$-space with $\widetilde{h}^2=0$ and $\widetilde{h}\neq0$ using these structures is to take the second one, which has $\widetilde{\mu}=2$.

Another reason for which the value $\mu=2$ is special comes from the fact that applying a $D_c$-homothetic deformation to a paracontact metric $(-1,2)$-space gives us another paracontact metric $(-1,2)$-space for any real $c\neq0$.

What happens when $\mu\neq2$? Are there any examples of paracontact metric $(-1,\mu)$-spaces with $h\neq0$ and $\mu\neq2$? The answer is yes, as was shown in Example~4.6 of \cite{CP} for the $3$-dimensional case. We will provide proof in dimensions greater than $3$ in Examples \ref{ex-mu0-h1}--\ref{ex-dim7-mu0-rank3}. Before constructing them, note that, given a  $(-1,\mu)$-space  with $\mu\neq2$, a  $\mathcal{D}_c$-homothetic deformation with $c=1-\frac{\mu}{2}\neq 0$ will give us a paracontact metric $(-1,0)$-space thanks to \eqref{kappamu-deformed}. Conversely, given a paracontact metric $(-1,0)$-space, if we $\mathcal{D}_c$-homothetically deform it with $c=\frac2{2-\mu}\neq 0$ for some $\mu\neq2$,  we will obtain a paracontact metric  $(-1,\mu)$-space with $\mu\neq2$.

Lastly, the rank of $h$ and $h'$ coincides because $h'=\frac{1}{c}h$ (\cite{CKM}) and a simple change of basis will give us another one satisfying all the properties of Theorem \ref{th-h}. Therefore, it makes sense to concentrate on the $\mu=0$ case, which is also special because the paracontact metric $(-1,0)$-spaces with $h\neq0$ satisfy \eqref{parasasakian-r} but are not paraSasakian manifolds.

We can give examples using left-invariant paracontact metric structures on Lie groups of $(2n+1)$-dimensional paracontact metric $(-1,0)$-spaces with $h\neq0$. We recall that $\text{rank} (h_p) \in \{ 1,\ldots,n \}$ (Theorem \ref{th-h}) and we will first see the case $\text{rank} (h)=1$.

\begin{example} [$(2n+1)$-dimensional paracontact metric $(-1,0)$-space with  $\text{rank}(h)=1$] \label{ex-mu0-h1}
Let $\mathfrak{g}$ be the $(2n+1)$-dimensional Lie algebra with basis $\{\xi,X_1,Y_1,\ldots,X_n,Y_n \}$ such that the only non-vanishing components are
\begin{gather*}
[\xi,X_1]=X_1+Y_1, \quad [\xi,Y_1]=-Y_1, \quad [X_1,Y_1]=2 \xi, \\
[X_i,Y_i]=2(\xi+Y_i), \quad [X_1,Y_i]=X_1+Y_1, \quad [Y_1,Y_i]=-Y_1, \quad i=2,\ldots,n.
\end{gather*}

If we denote by $G$ the Lie group whose Lie algebra is $\mathfrak{g}$, we can define a left-invariant paracontact metric structure on $G$ the following way:
\[
\varphi \xi=0,  \quad \varphi X_1=X_1, \quad  \varphi Y_1=-Y_1,  \quad\varphi X_i=-X_i, \quad  \varphi Y_i=Y_i, \quad i=2, \ldots,n,
\]
\[
\eta(\xi)=1,  \quad \eta(X_i)=\eta(Y_i)=0, \quad i=1,\ldots,n.
 \]
The only non-vanishing components of the metric are
\[
g(\xi,\xi)=g(X_1,Y_1)=1, \quad g(X_i,Y_i)=-1, \quad i=2,\ldots,n.
\]
A straightforward computation gives that $h X_1=Y_1$, $h Y_1=0$ and $h X_i=hY_i=0$, $i=2, \ldots,n$, so $h^2=0$ and $\text{rank}(h)=1$.

Moreover, by basic paracontact metric properties and Koszul's formula we obtain that
\begin{gather*}
\nabla_\xi X_1=0,  \quad \nabla_\xi Y_1=0, \quad \nabla_\xi X_i=X_i, \quad  \nabla_\xi Y_i=-Y_i, \quad i=2,\ldots,n,\\
\nabla_{X_i} Y_1=\delta_{i1}\xi, \quad \nabla_{X_i} Y_j=\delta_{ij} (\xi+2Y_i), \quad \nabla_{Y_1} X_1=-\xi, \quad \nabla_{Y_i} X_j=-\delta_{ij} \xi, \quad i,j=2,\ldots,n,\\
\nabla_{X_1} X_j=0,  \nabla_{Y_1} Y_1=\nabla_{Y_1} Y_j=0, \quad \nabla_{Y_j} Y_1=Y_1, \quad i=2,\ldots,n,
\end{gather*}
and thus
\begin{align*}
R(X_i,\xi)\xi &=-X_i, \quad i=1,\ldots,n,\\
R(Y_i,\xi)\xi &=-Y_i, \quad i=1,\ldots,n,\\
R(X_i,X_j)\xi &=R(X_i,Y_j)\xi=R(Y_i,Y_j)\xi=0, \quad i,j=1,\ldots,n.
\end{align*}
Therefore, the manifold is also a $(-1,0)$-space.
\end{example}

To our knowledge, the previous example is the first paracontact metric $(-1,\mu)$-space with $h^2=0$, $h\neq0$ and  $\mu\neq2$ that has been constructed in dimensions greater than $3$. For dimension $3$, Example~4.6 of \cite{CP} was already known.

Let us now see some other possibilities. In dimension $3$, $n=1$, so the previous example gives the only possible value of the rank of $h$.  In dimension $5$, we can construct an example with  $\text{rank}(h)=2$ in the following way:

\begin{example}[$5$-dimensional paracontact metric $(-1,0)$-space with  $\text{rank}(h)=2$] \label{ex-dim5-mu0}
Let us take the $5$-dimensional Lie algebra $\mathfrak{g}$ of basis $\{\xi,X_1,Y_1,X_2,Y_2 \}$ and whose Lie brackets are
\begin{align*}
[\xi,X_1]&=Y_1+X_1+X_2,                 & [\xi,Y_1]&=-Y_1+Y_2,\\
[\xi,X_2]&=Y_2+X_1+X_2,                 & [\xi,Y_2]&=Y_1-Y_2, \\
[X_1,Y_1]&=2\xi+X_2+\frac32 Y_2,        & [X_2,Y_2]&=-2\xi+X_1-\frac12 Y_1, \\
[X_1,X_2]&=\frac32 X_1 +\frac12 X_2,    & [Y_1,Y_2]&=-Y_1+Y_2,\\
[X_1,Y_2]&=\frac32 Y_1+X_2,             & [Y_1,X_2]&=-X_1+\frac12 Y_2.
\end{align*}
If we denote by $G$ the Lie group whose Lie algebra is $\mathfrak{g}$, we can define on it a left-invariant paracontact metric structure with
\[
\varphi \xi=0,  \quad \varphi X_i=X_i,  \quad \varphi Y_i=-Y_i, \quad  \eta(\xi)=1,  \quad \eta(X_i)=\eta(Y_i)=0, \quad i=1,2.
\]
and whose only non-vanishing components of the metric are
\[
g(\xi,\xi)=g(X_1,Y_1)=1, \quad g(X_2,Y_2)=-1.
\]
We can check that $h X_i=Y_i$ and $h Y_i=0$, $i=1,2$, so $h^2=0$ and $\text{rank}(h)=2$.

Moreover, long but straightforward computations give us that the manifold is a $(-1,0)$-space.
\end{example}

We show below examples of dimension $7$ with $\text{rank}(h)=2$ and $\text{rank}(h)=3$, respectively. Higher-dimensional examples could be constructed analogously.

\begin{example}[$7$-dimensional paracontact metric $(-1,0)$-space with  $\text{rank}(h)=2$]  \label{ex-dim7-mu0-rank2}
Let us define a $7$-dimensional Lie algebra $\mathfrak{g}$ of basis $\{\xi,X_1,Y_1,X_2,Y_2,X_3,Y_3 \}$ whose only non-vanishing Lie brackets are
\begin{align*}
[\xi,X_1]&=[X_1,X_3]=Y_1+X_1+X_2,           & [\xi,Y_1]&=[Y_1,X_3]=-Y_1+Y_2,\\
[\xi,X_2]&=[X_2,X_3]=Y_2+X_1+X_2,           & [\xi,Y_2]&=[Y_2,X_3]=Y_1-Y_2, \\
[X_1,Y_1]&=2\xi+\sqrt2(X_2+ Y_2),           & [X_2,Y_2]&=-2\xi+\sqrt2X_1, \\
[X_3,Y_3]&=-2\xi-2 X_3,                     & [X_1,X_2]&=-[Y_1,X_2]=\sqrt{2}X_1,\\
[X_1,Y_2]&=\sqrt2( Y_1+X_2),                & [Y_1,Y_2]&=\sqrt2(-Y_1+Y_2).
\end{align*}
If we denote by $G$ the Lie group whose Lie algebra is $\mathfrak{g}$, we can define on it a left-invariant paracontact metric structure with
\[
\varphi \xi=0,  \quad \varphi X_i=X_i,  \quad \varphi Y_i=-Y_i, \quad  \eta(\xi)=1,  \quad \eta(X_i)=\eta(Y_i)=0, \quad i=1,2,3.
\]
and whose only non-vanishing components of the metric are
\[
g(\xi,\xi)=g(X_1,Y_1)=1, \quad g(X_2,Y_2)=g(X_3,Y_3)=-1.
\]
A direct computation gives that $h X_i=Y_i$, $i=1,2$, $hX_3=0$ and $h Y_i=0$, $i=1,2,3$, so $h^2=0$ and $\text{rank}(h)=2$.

Moreover, long but straightforward computations give us that the manifold is a $(-1,0)$-space.
\end{example}

\begin{example}[$7$-dimensional paracontact metric $(-1,0)$-space with  $\text{rank}(h)=3$]  \label{ex-dim7-mu0-rank3}
Let us define a $7$-dimensional Lie algebra $\mathfrak{g}$ of basis $\{\xi,X_1,Y_1,X_2,Y_2,X_3,Y_3 \}$ whose Lie brackets are
\begin{align*}
[\xi,X_1]&=Y_1+X_1+X_2,                     & [\xi,Y_1]&=-Y_1+Y_2,\\
[\xi,X_2]&=Y_2+X_1+X_2,                     & [\xi,Y_2]&=Y_1-Y_2, \\
[\xi,X_3]&=X_3+Y_3,                         & [\xi,Y_3]&=-Y_3, \\
[X_1,Y_1]&=2\xi+\sqrt2(X_2+ Y_2),           & [X_2,Y_2]&=-2\xi+\sqrt2X_1, \\
[X_3,Y_3]&=-2\xi+\sqrt2 (X_1-X_2-Y_2),      & [X_1,X_2]&=-[Y_1,X_2]=\sqrt{2}X_1,\\
[X_1,Y_2]&=\sqrt2( Y_1+X_2),                & [Y_1,Y_2]&=\sqrt2(-Y_1+Y_2),\\
[X_1,Y_3]&=-[X_2,X_3]=[X_2,Y_3]=\sqrt2 X_3, & [Y_1,Y_3]&=-[Y_2,X_3]=[Y_2,Y_3]=\sqrt2 Y_3.\\
\end{align*}
If we denote by $G$ the Lie group whose Lie algebra is $\mathfrak{g}$, we can define on it a left-invariant paracontact metric structure with
\[
\varphi \xi=0,  \quad \varphi X_i=X_i,  \quad \varphi Y_i=-Y_i, \quad  \eta(\xi)=1,  \quad \eta(X_i)=\eta(Y_i)=0, \quad i=1,2,3.
\]
and whose only non-vanishing components of the metric are
\[
g(\xi,\xi)=g(X_1,Y_1)=1, \quad g(X_2,Y_2)=g(X_3,Y_3)=-1.
\]
Therefore, $h X_i=Y_i$, $i=1,2,3$ and $h Y_i=0$, $i=1,2,3$, thus $h^2=0$ and $\text{rank}(h)=3$. Moreover, direct computations give us that the manifold is a $(-1,0)$-space.
\end{example}

\begin{remark}
It is worth mentioning that Theorem~\ref{th-h} is true only pointwise, i.e. $\text{rank}(h_p)$ does not need to be the same for every $p\in M$. However, there are no known examples of paracontact metric $(-1,\mu)$-spaces where $\text{rank}(h)$ is not constant.
\end{remark}

%%%%%%%%%%%%%%%%%%%%%%%%%%%%%%%%%%%%%
%%%%%%%%%%%%%%%%%%%%%%%%%%%%%%%%%%%%
\section{Another manifold with $h^2=0$ but $h\neq0$}

Lastly, we would like to remark that, although the first examples of $h^2=0$ but $h\neq0$ appeared (quite naturally) in the context of paracontact metric $(\kappa,\mu)$-spaces, it is not difficult to construct paracontact metric manifolds satisfying these properties which are not $(-1,\mu)$-spaces.

\begin{example}
Let us take the  $5$-dimensional Lie algebra $\mathfrak{g}$ of basis $\{\xi,X_1,Y_1,X_2,Y_2 \}$ such that the only non-vanishing Lie brackets are
\[
[\xi,X_1]=Y_1, \quad [X_1,Y_1]=2\xi, \quad [X_2,Y_2]=2(\xi+X_2), \quad [X_1,X_2]=Y_1.
\]
If we denote by $G$ the Lie group whose Lie algebra is $\mathfrak{g}$, we can define on it a left-invariant paracontact metric structure:
\[
\varphi \xi=0,  \quad \varphi X_i=X_i,  \quad \varphi Y_i=-Y_i, \quad  \eta(\xi)=1,  \quad \eta(X_i)=\eta(Y_i)=0, \quad i=1,2.
\]
The only non-vanishing components of the metric are
\[
g(\xi,\xi)=g(X_1,Y_1)=g(X_2,Y_2)=1.
\]
Therefore, $h X_1=Y_1$ and $h Y_1=hX_2=hY_2=0$, so $h^2=0$ but $h\neq0$.

Although $R(X,\xi)\xi=-X+2hX$ for all vector field $X$ orthogonal to $\xi$, we can check that $R(X_1,X_2)\xi =-2Y_1\neq0$, so the manifold is not a $(-1,2)$-space.
\end{example}

\begin{remark}
Note that the previous Lie algebra coincides with the one of Example \ref{ex-mu2-h1} for $n=2$ (hence  the form of $h$ coincides) but that the construction of the paracontact metric structure is not the same, since both $\varphi$ and $g$ are defined differently.
\end{remark}

\textbf{Acknowledgements.}
The author would like to thank Prof. Mart\'in Avenda\~no for his valuable suggestions and insights.

%%%%%%%%%%%%%%%%%%%%%%%%%%%%%%%%%%%%%%%%%%%%%%%%%%%%%
%%%%%%%%%%%%%%%%%%%%%%%%%%%%%%%%%%%%%%%%%%%%%%%%%%%%%%%
%%%%%%%%%%%%%%%%%%%%%%%%%%%%%%%%%%%%%%%%%%%%%%%%%%%%%

\small

\end{document}